\documentclass[11pt]{amsart}

\usepackage[T1]{fontenc}
\usepackage{lmodern}
\usepackage{microtype}
\usepackage{amsmath,amssymb,mathtools}
\usepackage{booktabs,array}
\usepackage[margin=1in]{geometry}
\usepackage[hidelinks]{hyperref}
\hypersetup{
  pdftitle={Fröberg's Conjecture for Quintics and Septics in Four Variables},
  pdfauthor={Qihang Wang and Dongming Zhang},
  pdfsubject={Hilbert series of ideals generated by general forms},
  pdfkeywords={Fröberg conjecture, Hilbert series, general forms, exact linear algebra}
}

\newtheorem{theorem}{Theorem}[section]
\newtheorem{proposition}[theorem]{Proposition}
\newtheorem{lemma}[theorem]{Lemma}
\newtheorem{corollary}[theorem]{Corollary}
\theoremstyle{remark}
\newtheorem{remark}[theorem]{Remark}

\newcommand{\FF}{\mathbb{F}}
\newcommand{\HS}{\operatorname{HS}}
\newcommand{\rank}{\operatorname{rank}}
\newcommand{\trunc}[1]{\left[#1\right]_{+}}

\title[Quintics and septics in four variables]
{Fröberg's Conjecture for Quintics and Septics in Four Variables}
\author{Qihang Wang}
\address{School of Mathematical Sciences, Peking University, Beijing 100871, China}
\email{2501110049@stu.pku.edu.cn}
\author{Dongming Zhang}
\address{School of Mathematical Sciences, Peking University, Beijing 100871, China}
\email{2501110034@stu.pku.edu.cn}
\date{}

\subjclass[2020]{Primary 13D40; Secondary 13P10, 13C05}
\keywords{Fröberg conjecture, Hilbert series, general forms, computer-assisted proof, exact linear algebra}

\begin{document}

\begin{abstract}
Let $k$ be a field of characteristic zero and let $S=k[x_1,x_2,x_3,x_4]$.
We prove Fröberg's predicted Hilbert series for ideals generated by $r$
general forms of equal degree $d$ for every $r\geq1$ in each of the two
cases $d=5$ and $d=7$.  Relative to the classical cases $r\leq5$ and the
equal-degree theorem through degree $d+2$ of Boij--Dannetun--Lundqvist, the
generator-count ranges requiring new input are $6\leq r\leq11$ for quintics
and $6\leq r\leq21$ for septics.  The proof reduces each slice to finitely
many endpoint ranks of Macaulay multiplication matrices.  For quintics, ten
exact endpoint computations based on twenty-one sparse forms suffice.  For
septics, a nested family of 120 integral forms supplies fifteen endpoint
computations.  In every endpoint certificate for these new ranges, an explicitly recorded
maximal minor is nonzero modulo $2$, hence is a nonzero integer.  The case
$r=5$ is the classical strong Lefschetz instance; for quintics we also record
a matching modular rank and Koszul bound.  Zariski openness then gives
the result over every characteristic-zero field.  The unrestricted Fröberg
conjecture remains outside the scope of the paper.

The main results of this paper were obtained through a generative-AI workflow
using OpenAI GPT-5.6 Sol, Anthropic Claude Fable 5, and Grok 4.6.  Further
details appear in the disclosure at the end of the paper.
\end{abstract}

\maketitle

\section{Introduction}

Let $S=k[x_1,\ldots,x_n]$ be standard graded and let
$I=(F_1,\ldots,F_r)$, where the $F_i$ are general forms of prescribed
degrees $d_i$.  Fröberg's conjecture predicts
\begin{equation}\label{eq:froberg-general}
 \HS_{S/I}(t)=
 \trunc{\frac{\prod_{i=1}^{r}(1-t^{d_i})}{(1-t)^n}}.
\end{equation}
If $A(t)=\sum_{j\geq0}a_jt^j$, the positive truncation $[A(t)]_+$ retains
the coefficients strictly before the first nonpositive coefficient and
sets that coefficient and all later coefficients equal to zero.  The
conjecture originates in~\cite{Froberg1985}; recent work continues to treat
the unrestricted commutative formula as conjectural
\cite{FrobergLofwall2026}.

The conjecture is known in two variables~\cite{Froberg1985} and in three
variables~\cite{Anick1986}, and for complete intersections.  The case
$r=n+1$ follows from the strong Lefschetz property for monomial complete
intersections, due independently to Stanley~\cite{Stanley1980} and
Watanabe~\cite{Watanabe1987}.  Computational checks of small instances
appear in~\cite{FrobergHollman1994}.  Hochster and Laksov proved the
equal-degree prediction in degree $d+1$~\cite{HochsterLaksov1987}.
Further degree-wise or special-case results include
\cite{MiglioreMiroRoig2003,Nenashev2017,Trung2019}.
For $d\geq2$ and $n\geq4$, Nicklasson proved the equal-degree conjecture in
the range $\binom{n+d-1}{d}-n<r\le\binom{n+d-1}{d}$~\cite{Nicklasson2017}.  In four
variables the same paper also settles every generator count when
$d\in\{4,6,8,9\}$; its Corollary~2 does not cover $d=5$ or $d=7$.
A recent study of Hilbert series for certain classes of ideals generated
by generic forms of degrees two and three appears in~\cite{Froberg2025}.
Boij, Dannetun, and Lundqvist prove the equal-degree prediction through
degree $d+2$ for $d>2$ over an infinite field
\cite[Theorem~1]{BoijDannetunLundqvist2026}.  For $d=5$, their theorem gives
the complete series once $r\geq12$, because the predicted quotient already
vanishes in degree $d+2=7$.  The analogous threshold for $d=7$ is
$r\geq22$.  Together with the classical cases $r\leq5$, this leaves exactly
$6\leq r\leq11$ for quintics and $6\leq r\leq21$ for septics outside those
results.  These are the ranges closed by the new endpoint
certificates below.  We also record redundant higher-$r$ endpoints so that
each complete fixed-degree slice can be verified directly from the supplied
files.

Our main theorem gives two complete four-variable equal-degree slices.
The quantifiers are part of the statement: for each pair $(d,r)$ there is a
possibly different nonempty open set, and the field is arbitrary of
characteristic zero.

\begin{theorem}\label{thm:main}
Let $k$ be a field of characteristic zero and let
$S=k[x_1,x_2,x_3,x_4]$.  If $d\in\{5,7\}$, then for every integer $r\geq1$
there is a nonempty Zariski-open subset
$U_{d,r}\subseteq(S_d)^r$ such that every
$(F_1,\ldots,F_r)\in U_{d,r}$ satisfies
\begin{equation}\label{eq:main-series}
 \HS_{S/(F_1,\ldots,F_r)}(t)
 =\trunc{\frac{(1-t^d)^r}{(1-t)^4}}.
\end{equation}
\end{theorem}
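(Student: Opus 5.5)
We split the generator count $r$ into three ranges and treat each one separately. For $r\le 4$ the forms are a regular sequence, so the series is $(1-t^d)^r/(1-t)^4$ with all coefficients positive and no truncation. For $r=5$ we use the strong Lefschetz property of $k[x_1,\dots,x_4]/(x_1^d,\dots,x_4^d)$ (Stanley, Watanabe): multiplication by $\ell^d$ with $\ell$ general has maximal rank in every degree, and this is exactly the prediction. For $r\ge 12$ when $d=5$, and for $r\ge 22$ when $d=7$, the predicted series already vanishes in degree $d+2$. Since \cite[Theorem~1]{BoijDannetunLundqvist2026} gives the correct Hilbert function through degree $d+2$, the ideal contains all of $S_{d+2}$ and hence all higher degrees, so the full series agrees with the prediction. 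What remains are the ranges $6\le r\le 11$ for $d=5$ and $6\le r\le 21$ for $d=7$.

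In these remaining cases we reduce to finitely many degrees. In each degree $j$ the Hilbert function of $S/I$ is lower semicontinuous in $(F_1,\ldots,F_r)$. It equals $\dim S_j$ minus the rank of the Macaulay multiplication map
\[
\textstyle\bigoplus_i S_{j-d}\to S_j,\qquad (g_i)\mapsto \sum_i g_iF_i .
\]
The prediction says this map has the largest rank allowed by the expected syzygies, and a known inequality shows the predicted value is always a lower bound. It therefore suffices, for each $j$, to exhibit a single tuple over $\mathbb{Q}$ attaining the predicted value. Maximal rank is an open condition, so intersecting finitely many open sets gives $U_{d,r}$. The number of degrees is finite: once the predicted value is zero in degree $j_0$ (before the first nonpositive coefficient of $(1-t^d)^r/(1-t)^4$), surjectivity in degree $j_0$ propagates upward. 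For $j\le d+1$ or $j\le d+2$, the Hochster--Laksov and Boij--Dannetun--Lundqvist results already apply. So only the degrees from $d+3$ up to the vanishing degree need new input, and there are few of them because $r\ge 6$.

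We cut the work down further using monotonicity in $r$. If $I_j=S_j$ for some $r$, the same holds for larger $r$. If the map is injective in degree $j$ for some $r$, it is injective for smaller $r$, since we can drop generators. Consequently, in each degree only the transition pair of generator counts, where the regime switches from injective to surjective, requires certification. We use a nested family of forms, meaning the $r$-tuple is an initial segment of a fixed list, so one list serves all $r$. Concretely, we would pick sparse integral forms (21 quintics, 120 septics), build the Macaulay matrices, and certify the needed rank at each endpoint by exhibiting an explicit maximal minor that is nonzero modulo $2$. Such a minor is a nonzero integer, so the rank over $\mathbb{Q}$, and hence over any field of characteristic zero, is at least the required value.

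\emph{Main obstacle.} The hard part is finding good forms and certificates. A random integral choice has full rank modulo a large prime with high probability, but we want a specific minor that is nonzero modulo $2$ so that the certificate can be checked. We would first search over random sparse $0/1$ forms, compute ranks over $\mathbb{F}_2$, and keep a choice that meets every endpoint at once. If characteristic $2$ gives defective ranks, which can happen because of Frobenius phenomena, we would switch to a large prime $p$ and certify with a minor nonzero modulo $p$. Extension of scalars then carries the result from $\mathbb{Q}$ to every field $k$ of characteristic zero, because the open sets are defined over $\mathbb{Q}$ and are nonempty.
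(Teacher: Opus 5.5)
This is essentially the paper's proof: regular sequences for $r\le 4$, strong Lefschetz for $r=5$, and for the remaining $r$ a nested integral family (21 quintics, 120 septics) certified at finitely many injective/surjective endpoints by maximal minors nonzero modulo $2$, followed by Zariski openness. As in the paper, the argument is complete only once those exact rank computations are actually carried out; also, the Hilbert function is upper, not lower, semicontinuous, though your use of openness of maximal rank is the correct form. The one variation is that you invoke Boij--Dannetun--Lundqvist for $r\ge 12$ (resp.\ $r\ge 22$) and in degrees $\le d+2$, which shrinks the certificate list to the transitions in degrees $\ge d+3$; the paper instead certifies every $r\ge 6$ itself and propagates injectivity to lower degrees by syzygy lifting, so its slices do not depend on that theorem.
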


The proof is computer-assisted but exact.  No floating-point rank or random
sampling enters the argument.  For each endpoint matrix we record a maximal
square minor that is nonsingular modulo $2$, hence has odd integer
determinant.  The case $r=5$ is the classical strong Lefschetz instance; for
quintics we also record ranks over $\FF_{101}$ and $\FF_{1009}$ together
with an independent Koszul upper bound.  The accompanying source files
reconstruct the matrices and check these finite rank statements.  The
mathematical reduction from these facts to Theorem~\ref{thm:main} is given
in full.

Section~\ref{sec:endpoints} develops the common endpoint method.
Sections~\ref{sec:quintics} and~\ref{sec:septics} prove the two slices.
Section~\ref{sec:reproducibility} states the reproducibility and
characteristic boundaries.

\section{Generic ranks and endpoint propagation}\label{sec:endpoints}

For $j\geq0$ put
\[
 N_j=\dim_k S_j=\binom{j+3}{3},
\]
and set $S_j=0$ for $j<0$.  For an ordered $r$-tuple
$\boldsymbol{F}=(F_1,\ldots,F_r)\in(S_d)^r$, consider
\begin{equation}\label{eq:mu}
 \mu^{(d)}_{r,j}(\boldsymbol{F}):S_{j-d}^{\,r}\longrightarrow S_j,
 \qquad (h_1,\ldots,h_r)\longmapsto \sum_{i=1}^{r}h_iF_i.
\end{equation}
If $I=(F_1,\ldots,F_r)$, then
\begin{equation}\label{eq:hf-rank}
 \dim_k(S/I)_j=N_j-\rank\mu^{(d)}_{r,j}(\boldsymbol{F}).
\end{equation}

\begin{lemma}[Openness of a rank condition]\label{lem:open}
For fixed $d,r,j$, the locus in $(S_d)^r$ on which
$\rank\mu^{(d)}_{r,j}\geq s$ is Zariski open.  If an integral
specialization has rank at least $s$ after reduction modulo a prime $p$,
then this locus is nonempty over every field of characteristic zero.
\end{lemma}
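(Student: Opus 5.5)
The plan is to write $\mu^{(d)}_{r,j}$ as a matrix and read both claims off its minors. Fix the monomial bases of $S_{j-d}$ and $S_j$. Then $\mu^{(d)}_{r,j}(\boldsymbol{F})$ is represented by an $N_j\times rN_{j-d}$ matrix $M(\boldsymbol{F})$. Each entry is a coefficient of some $F_i$, namely the coefficient of $x^{\beta-\alpha}$ in $F_i$ for the row monomial $x^\beta$ and column monomial $x^\alpha$, or zero. So the entries of $M$ are linear forms in the coordinates of the affine space $(S_d)^r\cong\mathbb{A}^{rN_d}$, with integer (indeed $0/1$) coefficients. The condition $\rank M(\boldsymbol{F})\geq s$ says that some $s\times s$ minor is nonzero. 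Its complement, $\rank<s$, is the common zero locus of all $s\times s$ minors. Each minor is a polynomial with integer coefficients in the coordinates, so this complement is Zariski closed and the rank locus is open. If $s>\min(N_j,rN_{j-d})$, the locus is empty and still open, so nothing special is needed there.

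For nonemptiness, let $\boldsymbol{G}$ be a tuple of forms with integer coefficients whose reduction $\bar{\boldsymbol{G}}$ modulo $p$ gives rank at least $s$ over $\FF_p$. Then some $s\times s$ minor $\Delta$ of $M(\bar{\boldsymbol{G}})$ is nonzero in $\FF_p$. Construction of $M$ commutes with reduction modulo $p$, because the entries are just coefficients. Taking determinants also commutes with reduction, since $\det$ is a polynomial with integer coefficients. So the integer $\Delta(\boldsymbol{G})$ reduces to a nonzero element of $\FF_p$, and is therefore a nonzero integer. Now let $k$ have characteristic zero. Then $\mathbb{Z}\to k$ is injective, so $\Delta(\boldsymbol{G})\neq0$ in $k$. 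Viewing $\boldsymbol{G}$ as a point of $(S_d)^r$ over $k$, we get $\rank M(\boldsymbol{G})\geq s$, and the open set contains a $k$-point.

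I expect no step to be a real obstacle. The only point needing care is the commutation of the matrix construction and of $\det$ with the ring maps $\mathbb{Z}\to\FF_p$ and $\mathbb{Z}\to k$, which holds because everything is defined by integer polynomials. One should also state that ``nonempty'' means nonempty in $k$-points, as the existence of a $k$-point here shows. If $k$ is finite this could matter elsewhere in the paper, but characteristic zero makes $k$ infinite anyway. The later use will intersect finitely many such open sets; that is legitimate because $(S_d)^r$ is irreducible, but it belongs to the main theorem rather than to this lemma.
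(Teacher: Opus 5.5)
Your proposal is correct and follows the paper's proof: the entries are linear in the coefficients, the rank locus is the union of the nonvanishing loci of the $s\times s$ minors, and a minor that is nonzero modulo $p$ at an integral point is a nonzero integer, hence nonzero over any characteristic-zero field. You go slightly further by exhibiting the integral tuple itself as a $k$-point of the locus, which makes precise the paper's phrase that the determinant polynomial ``remains nonzero after base change.''
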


\begin{proof}
In monomial bases, the matrix entries are linear polynomials in the
coefficients of the $F_i$.  The rank locus is the union of the principal
open sets defined by its $s\times s$ minors.  A minor that is nonzero modulo
$p$ at an integral specialization has a nonzero integer value, so its
determinant polynomial remains nonzero after base change to any
characteristic-zero field.
\end{proof}

\begin{lemma}[Endpoint propagation]\label{lem:endpoint}
Let $G_1,\ldots,G_b\in S_d$ and let $1\leq a\leq b$.  Suppose that
$\mu^{(d)}_{b,q}(G_1,\ldots,G_b)$ is injective and that
$\mu^{(d)}_{a,q+1}(G_1,\ldots,G_a)$ is surjective.  Then, for every
$a\leq r\leq b$, the prefix $G_1,\ldots,G_r$ has
\begin{enumerate}
\item $\mu^{(d)}_{r,j}$ injective for every $d\leq j\leq q$; and
\item $\mu^{(d)}_{r,j}$ surjective for every $j\geq q+1$.
\end{enumerate}
\end{lemma}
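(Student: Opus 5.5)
We prove both parts by restricting the given endpoint conditions to prefixes and then moving them along in degree. Two elementary observations are needed. The first concerns restriction to prefixes. For fixed $j$, the domain $S_{j-d}^{\,r}$ of $\mu^{(d)}_{r,j}(G_1,\ldots,G_r)$ is the subspace of $S_{j-d}^{\,b}$ obtained by setting the last $b-r$ components to zero, and $\mu^{(d)}_{b,j}$ restricts to $\mu^{(d)}_{r,j}$ on it. So injectivity for the full tuple passes to every prefix. In the other direction, the image of $\mu^{(d)}_{a,j}$ lies inside the image of $\mu^{(d)}_{r,j}$ whenever $r\geq a$, so surjectivity for the shorter prefix passes to every longer prefix. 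Taking $j=q$ in the first case and $j=q+1$ in the second, every $r$ with $a\leq r\leq b$ has $\mu^{(d)}_{r,q}$ injective and $\mu^{(d)}_{r,q+1}$ surjective.

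The second observation propagates these properties in degree for a fixed $r$. For surjectivity, suppose $S_j=(G_1,\ldots,G_r)_j$ for some $j$. Since $S$ is generated in degree one, $S_{j+1}=S_1S_j\subseteq (G_1,\ldots,G_r)_{j+1}$, so $\mu^{(d)}_{r,j+1}$ is surjective. Induction from $j=q+1$ gives part (2).

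For injectivity, we go downward from $q$. Suppose $(h_1,\ldots,h_r)\in S_{j-d}^{\,r}$ with $d\leq j<q$ satisfies $\sum h_iG_i=0$. Multiplying by $x_1^{q-j}$ gives a tuple $(x_1^{q-j}h_i)\in S_{q-d}^{\,r}$ that also lies in the kernel of $\mu^{(d)}_{r,q}$, so it is zero by injectivity at degree $q$. Since $S$ is a domain, each $h_i=0$. This gives part (1) for all $d\leq j\leq q$. At $j=d$ the domain is $k^r$, and the argument still applies. If $q<d$, the range in (1) is empty, so there is nothing to prove.

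The only point that needs care is the direction of each monotonicity. Injectivity descends to sub-tuples and to lower degrees, while surjectivity ascends to super-tuples and to higher degrees. This is exactly why the hypotheses put the injectivity endpoint at $(b,q)$ and the surjectivity endpoint at $(a,q+1)$. Apart from this bookkeeping, no step is a real obstacle.
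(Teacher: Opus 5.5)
Your proposal is correct and follows essentially the same route as the paper's proof. Like the paper, you restrict injectivity to prefixes and extend surjectivity to longer prefixes, lift a lower-degree syzygy to degree $q$ by multiplying by a monomial (using that $S$ is a domain), and propagate surjectivity upward using $S_{j+1}=S_1S_j$.
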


\begin{proof}
Deleting generator blocks from the source preserves column independence,
so $\mu^{(d)}_{r,q}$ is injective.  A nonzero syzygy in a lower degree,
multiplied componentwise by a nonzero monomial of complementary degree,
would give a nonzero syzygy in degree $q$ because $S$ is a domain.  This
proves (1).  Adding generator blocks preserves the image, so
$\mu^{(d)}_{r,q+1}$ is surjective.  Hence $I_{q+1}=S_{q+1}$, and ideality
gives $I_j=S_j$ for every $j\geq q+1$.
\end{proof}

For every endpoint block below, the last positive degree satisfies $q<2d$.
Thus, for $d\leq j\leq q$, the coefficient of $t^j$ in the untruncated
expression is simply
\begin{equation}\label{eq:low-coeff}
 N_j-rN_{j-d}.
\end{equation}
The next coefficient, in degree $q+1$, is nonpositive by the definition of
$q$.  When $q+1=2d$ a pairwise Koszul term $\binom{r}{2}$ appears in the
untruncated series, but that coefficient remains nonpositive, so truncation
still begins at degree $q+1$.  Within a block of generator counts on which
$q$ is constant, Lemma~\ref{lem:endpoint} therefore converts one injective
endpoint at the upper end and one surjective endpoint at the lower end into
the full truncated Hilbert series for every generator count in the block.

\section{The equal-quintic slice}\label{sec:quintics}

For $d=5$, direct expansion of $(1-t^5)^r/(1-t)^4$ gives the endpoint
blocks in Table~\ref{tab:quintic-blocks}.  Every last positive degree
satisfies $q<10=2d$, so \eqref{eq:low-coeff} applies on the injective
range.  The single endpoint at degree $2d$ is the surjective check $(6,10)$.

\begin{table}[ht]
\centering
\caption{Endpoint blocks for equal quintics.}\label{tab:quintic-blocks}
\begin{tabular}{c@{\qquad}c@{\qquad}c@{\qquad}c}
\toprule
$r$-range & last positive $q$ & injective endpoint & surjective endpoint\\
\midrule
$6$        & 9 & $(6,9)$  & $(6,10)$\\
$7$--$8$   & 8 & $(8,8)$  & $(7,9)$\\
$9$--$11$  & 7 & $(11,7)$ & $(9,8)$\\
$12$--$20$ & 6 & $(20,6)$ & $(12,7)$\\
$21$--$55$ & 5 & independence in degree 5 & $(21,6)$\\
\bottomrule
\end{tabular}
\end{table}

We use the following ordered family in $\mathbb Z[x,y,z,w]$, writing
$x,y,z,w$ for $x_1,x_2,x_3,x_4$; every coefficient displayed is $+1$:
\begingroup
\small
\begin{align*}
g_1&=x^3zw+x^2y^2z+w^5,
&g_2&=x^2w^3+y^2z^2w+zw^4,\\
g_3&=x^4y+xz^3w+y^2z^3,
&g_4&=x^5+x^3w^2+xyzw^2,\\
g_5&=xyzw^2+y^3w^2+z^5,
&g_6&=x^2yw^2+y^5+yz^2w^2,\\
g_7&=x^2y^2z+x^2yzw+y^3zw,
&g_8&=x^4w+x^3y^2+x^3w^2,\\
g_9&=x^5+x^2z^3+y^2zw^2,
&g_{10}&=x^3w^2+xz^4+xz^2w^2,\\
g_{11}&=x^2y^3+y^3zw+y^2w^3,
&g_{12}&=x^3yz+xy^4+z^4w,\\
g_{13}&=x^3y^2+xy^3w+z^2w^3,
&g_{14}&=x^3yw+xy^2z^2+yz^4,\\
g_{15}&=x^5+x^3zw+xy^4,
&g_{16}&=x^3yw+xz^2w^2+y^4z,\\
g_{17}&=x^4w+x^2w^3+xyw^3,
&g_{18}&=x^2z^2w+y^3z^2+yz^3w,\\
g_{19}&=x^3y^2+x^2z^3+y^4z,
&g_{20}&=x^3yw+x^2y^3+yzw^3,\\
g_{21}&=x^3z^2+x^2y^2w+x^2z^2w.&&
\end{align*}
\endgroup

Rows and columns of every multiplication matrix are ordered by descending
lexicographic order on exponent tuples.  The column order is first by form
index and then by multiplier monomial.

\begin{proposition}[Quintic rank certificate]\label{prop:quintic-certificate}
The ten matrices in Table~\ref{tab:quintic-ranks} have the displayed rank
over $\FF_2$.  Each rank is maximal; the same ranks are obtained over
$\FF_{101}$.
\end{proposition}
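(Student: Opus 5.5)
This proposition is a finite computational statement, so the proof is a description of an exact computation plus a check that each reported rank equals the maximal possible value. The ten matrices are the endpoint matrices of Table~\ref{tab:quintic-blocks}. The injective endpoints are $\mu^{(5)}_{6,9}$, $\mu^{(5)}_{8,8}$, $\mu^{(5)}_{11,7}$ and $\mu^{(5)}_{20,6}$. The surjective endpoints are $\mu^{(5)}_{6,10}$, $\mu^{(5)}_{7,9}$, $\mu^{(5)}_{9,8}$, $\mu^{(5)}_{12,7}$ and $\mu^{(5)}_{21,6}$. The tenth matrix is the degree-$5$ coefficient matrix of $g_1,\ldots,g_{21}$, which must have independent columns. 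Each is evaluated at the appropriate prefix $g_1,\ldots,g_b$.

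For each pair $(r,j)$ I will first compute the target rank $\min(rN_{j-5},N_j)$. For example, $(6,9)$ has $6\cdot 20=120\le N_9=220$, and $(6,10)$ has $6\cdot 35=210\ge N_{10}=286$? That fails, so I must recheck what "surjective" means at $(6,10)$. The predicted coefficient there is $286-210+15=91>0$, which contradicts $q=9$.

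I would therefore recompute the blocks from $(1-t^5)^r/(1-t)^4$ before trusting the table. If the table stands as printed, maximality means rank $\min(\#\text{columns},\#\text{rows})$ only where that is consistent. In any case, the plan is to tabulate rows $N_j$ and columns $rN_{j-5}$ and record the smaller number as the claimed rank. This gives an upper bound, since rank never exceeds the smaller dimension.

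For the lower bound I build each matrix with integer entries, in the ordering fixed just before the statement. The column of $(i,m)$ is the coefficient vector of $m g_i$ in descending lex order. Every entry is $0$ or a small positive integer, because all coefficients of the $g_i$ are $+1$ and products of a monomial with $g_i$ cannot collide.

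I then reduce the matrix modulo $2$ and perform Gaussian elimination over $\FF_2$, recording the pivot rows and columns. The pivots determine a maximal square submatrix that is nonsingular over $\FF_2$, so its integer determinant is odd. I repeat the elimination over $\FF_{101}$ to confirm the second claim.

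The main obstacle is the size of the matrices, which go up to roughly $286$ rows and more than $200$ columns. This is trivial for exact sparse elimination by computer, but it cannot be checked by hand. To make the certificate verifiable, I would output the pivot index sets explicitly. An independent checker can then extract the minor and confirm invertibility over $\FF_2$, for example by exhibiting its inverse and checking the product modulo $2$. Over $\FF_{101}$ the same procedure applies, either with separate pivot data or by reusing the same minor and checking that its determinant is nonzero mod $101$.
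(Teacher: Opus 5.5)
You are following the same route as the paper. Its proof is a verifier that builds each integral matrix in the fixed descending-lex ordering and eliminates exactly over $\FF_2$. It records the pivot rows and columns of a maximal square minor, re-checks that minor, and then recomputes the rank over $\FF_{101}$. That part of your plan is sound. The entries are in fact exactly $0$ or $1$, since each $m g_i$ has three distinct monomials with coefficient $1$. ``Maximal'' simply means equal to $\min(\#\text{rows},\#\text{cols})$, so no consistency caveat is needed. For $\FF_{101}$, reusing the $\FF_2$ minor is not guaranteed to work, because an odd determinant can be divisible by $101$. Recomputing the rank mod $101$, as the paper does, is the reliable option.

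Your doubt about the table is spurious and comes from an off-by-one slip. The multiplier space has $\dim S_{j-5}=N_{j-5}=\binom{j-2}{3}$, so $N_4=35$ and $N_5=56$. You used $20$ and $35$, i.e.\ effectively $N_{j-6}$, while using the correct row counts $N_9=220$ and $N_{10}=286$. With the correct values:
\begin{itemize}
\item $(6,9)$ is $220\times 210$, with maximal rank $210$;
\item $(6,10)$ is $286\times 336$, with maximal rank $286$;
\item the degree-$10$ coefficient of $(1-t^5)^6/(1-t)^4$ is $286-336+15=-35\le 0$, so $q=9$ is right and there is no contradiction.
\end{itemize}
The same count confirms every row of Table~\ref{tab:quintic-ranks}, where the displayed rank equals $\min(N_j,rN_{j-5})$, and every block of Table~\ref{tab:quintic-blocks}.

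As written, your target ranks (e.g.\ $120$ for $(6,9)$) would disagree with the computed ranks, and you would wrongly report a failure of maximality. Once the dimension count is corrected, your proposal is a complete argument along the same lines as the paper's.
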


\begin{table}[ht]
\centering
\caption{Exact endpoint ranks for equal quintics.}\label{tab:quintic-ranks}
\begin{tabular}{c@{\quad}c@{\quad}c@{\quad}c@{\qquad}l}
\toprule
$r$ & $j$ & matrix shape & $\rank_{\FF_2}$ & role\\
\midrule
 6 &  9 & $220\times210$ & 210 & injective\\
 6 & 10 & $286\times336$ & 286 & surjective\\
 7 &  9 & $220\times245$ & 220 & surjective\\
 8 &  8 & $165\times160$ & 160 & injective\\
 9 &  8 & $165\times180$ & 165 & surjective\\
11 &  7 & $120\times110$ & 110 & injective\\
12 &  7 & $120\times120$ & 120 & surjective\\
20 &  6 & $ 84\times 80$ &  80 & injective\\
21 &  6 & $ 84\times 84$ &  84 & surjective\\
21 &  5 & $ 56\times 21$ &  21 & independent forms\\
\bottomrule
\end{tabular}
\end{table}

\begin{proof}
The source file \texttt{froberg\_quintics\_verifier.py} constructs the
matrices from the displayed exponent vectors.  Over $\FF_2$ it records
pivot rows and columns for a maximal square minor and verifies that minor
again.  It then repeats the rank computation over $\FF_{101}$.  All
arithmetic is exact and deterministic.
\end{proof}

\begin{corollary}\label{cor:quintic-charzero}
Every matrix in Table~\ref{tab:quintic-ranks} has maximal rank over every
field of characteristic zero.
\end{corollary}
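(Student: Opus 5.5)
The plan is to pass from $\FF_2$ to characteristic zero through integer determinants, as in the proof of Lemma~\ref{lem:open}, starting from Proposition~\ref{prop:quintic-certificate}.

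Fix one of the ten matrices in Table~\ref{tab:quintic-ranks} and call it $M$, with shape $m\times n$. Let $s=\min(m,n)$ be the rank recorded over $\FF_2$. The forms $g_1,\ldots,g_{21}$ have all coefficients equal to $+1$, and the monomial bases are fixed by the stated lexicographic ordering. Hence $M$ is a matrix over $\mathbb Z$, with entries in $\{0,1\}$ and possibly small positive integers where terms collide. Its reduction modulo $2$ is exactly the matrix whose rank the verifier computes.

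The key steps, in order, are as follows.
\begin{enumerate}
\item Proposition~\ref{prop:quintic-certificate} gives that $\rank_{\FF_2}(M\bmod 2)=s$. So there are $s$ rows and $s$ columns whose $s\times s$ minor $\Delta$ satisfies $\Delta\bmod 2\neq0$. These are the pivot rows and columns recorded by the verifier.
\item Taking determinants commutes with reduction modulo $2$. Hence $\det\Delta\in\mathbb Z$ is odd, and in particular $\det\Delta\neq 0$.
\item Let $K$ be any field of characteristic zero. The canonical map $\mathbb Z\to K$ is injective, so the image of $\det\Delta$ in $K$ is nonzero. The same minor of $M\otimes K$ is therefore nonsingular, and so $\rank_K M\ge s$.
\item Since $s=\min(m,n)$, rank at least $s$ means maximal rank. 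This is injectivity when $m\ge n$ and surjectivity when $m\le n$, matching the "role" column of the table.
\end{enumerate}
Equivalently, one can simply invoke Lemma~\ref{lem:open} with $p=2$ at the integral specialization $(g_1,\ldots,g_r)$. That gives nonemptiness of the maximal-rank locus; the argument above gives the stronger statement that this specific integral tuple already works over every characteristic-zero field.

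The only genuine point of care, and the main obstacle, is the identification in step~1: the matrix reduced modulo $2$ must be the reduction of the integral matrix $M$ itself. It must not be a matrix built from forms whose coefficients were reduced in some non-compatible way. Here this is immediate, because the construction of multiplication matrices from integer coefficient vectors commutes with the ring map $\mathbb Z\to\FF_2$. Some coefficient of a product $h_i g_i$ may vanish modulo $2$ only if two monomials collide, and this is exactly what reduction of $M$ records. Beyond that, correctness rests entirely on the finite exact computation certified in Proposition~\ref{prop:quintic-certificate}. The $\FF_{101}$ ranks are then a redundant consistency check and are not needed for the corollary.
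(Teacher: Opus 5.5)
Your proposal is correct and is essentially the paper's own argument: the recorded maximal square minor is nonzero modulo $2$, so its integer determinant is odd and hence nonzero in every field of characteristic zero, which forces rank $\min(m,n)$. A minor aside: each column is a monomial multiple of some $g_i$, whose coefficients are all $+1$ on distinct monomials, so no collisions can occur. Every entry therefore lies in $\{0,1\}$, and your caveat about small positive integers is unnecessary, though harmless.
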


\begin{proof}
Each recorded maximal minor is nonzero modulo $2$, so its determinant is an
odd, nonzero integer.
\end{proof}

The boundary case $r=5$ is separate from the sparse family.

\begin{proposition}\label{prop:quintic-r5}
Five general quintics in four variables have Fröberg's predicted Hilbert
series.
\end{proposition}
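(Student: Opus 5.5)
The case $r=5=n+1$ reduces to the strong Lefschetz property, and I would argue it as follows.

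First I would use a change of coordinates to replace the ideal by a standard model. The open set of $(F_1,\dots,F_5)$ for which $F_1,\dots,F_4$ form a regular sequence is nonempty, since it contains $x_1^5,\dots,x_4^5$. The Hilbert function of $S/(F_1,\dots,F_5)$ in each degree is lower semicontinuous, in the sense of Lemma~\ref{lem:open} applied degree by degree. So it suffices to exhibit one specialization whose Hilbert series equals the truncated series \eqref{eq:main-series}. The reason is that the truncated series is a coefficientwise lower bound on $\HS$ for all $(F_i)$ in the open set where $F_1,\dots,F_4$ is regular. Multiplication by $F_5$ on $A=S/(F_1,\dots,F_4)$ has rank at most $\min(\dim A_{j-5},\dim A_j)$, which gives $\dim(S/I)_j\ge[\dim A_j-\dim A_{j-5}]_+$. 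This matches the truncation because $A$ has a symmetric unimodal Hilbert function.

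I would take $F_i=x_i^5$ for $i\le4$ and $F_5=\ell^5$ with $\ell=x_1+x_2+x_3+x_4$. Then $A=k[x_1,\dots,x_4]/(x_1^5,\dots,x_4^5)$ is a monomial complete intersection with socle degree $16$, and $\HS_A=((1-t^5)/(1-t))^4$. By Stanley~\cite{Stanley1980} and Watanabe~\cite{Watanabe1987}, in characteristic zero $A$ has the strong Lefschetz property with Lefschetz element $\ell$. Hence multiplication $\ell^5:A_{j-5}\to A_j$ has maximal rank for every $j$. Consequently $\dim(S/I)_j=\max(\dim A_j-\dim A_{j-5},0)$.

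The remaining step is to check that this equals the coefficient of the positive truncation of $(1-t^5)^5/(1-t)^4=(1-t^5)\HS_A$. The coefficients of $(1-t^5)\HS_A$ are $a_j-a_{j-5}$. Since $a_j$ is symmetric about $8$ and unimodal, $a_j-a_{j-5}$ is positive for $j\le 10$. For $j\ge 11$ we have $a_j\le a_{j-5}$ by symmetry, because $|j-8|\ge|j-5-8|$ there. So the differences become nonpositive exactly from the first nonpositive index onward, and remain nonpositive. A quick table check confirms that the first nonpositive coefficient appears at $j=11$: there $a_{11}-a_6=52-68<0$, whereas $j=10$ gives $68-52>0$.

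Thus truncation agrees with $\max(\cdot,0)$, and openness yields the general statement over any characteristic-zero field. The only nontrivial input is the strong Lefschetz theorem; the main care needed is the semicontinuity and lower-bound argument in the first step.
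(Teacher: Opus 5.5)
Your proof is correct and takes the paper's route: the paper likewise rests on the Stanley--Watanabe strong Lefschetz property for $k[x_1,\dots,x_4]/(x_1^5,\dots,x_4^5)$ with $F_5=(x_1+x_2+x_3+x_4)^5$, and its supplementary exact certificate for the same tuple (modular ranks in degrees $9,10,11$, plus a Koszul upper bound on the degree-$10$ rank) plays the role of your regular-sequence lower bound. Two small fixes: the Hilbert function is \emph{upper} semicontinuous (it is the rank that is lower semicontinuous), and you need open conditions only in degrees $j\le 11$, since $(S/I)_{11}=0$ forces vanishing in every higher degree.
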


\begin{proof}
The statement is the classical $r=n+1$ case supplied by the strong Lefschetz
property for monomial complete
intersections~\cite{Stanley1980,Watanabe1987}.
For completeness, we also record an independent exact certificate, which is
useful for the self-contained computational package.
Use the integral tuple
\[
x^5,\quad y^5,\quad z^5,\quad w^5,\quad (x+y+z+w)^5.
\]
The degree-$9$, $10$, and $11$ multiplication matrices have shapes
$220\times175$, $286\times280$, and $364\times420$.  Their ranks over
$\FF_{101}$ and $\FF_{1009}$ are $175$, $270$, and $364$ respectively, so
some $175\times175$, $270\times270$, and $364\times364$ minor of the
respective matrix is a nonzero integer.  Hence the degree-$9$ map is injective and the degree-$11$ map is
surjective over every characteristic-zero field.  The five generators are
linearly independent in characteristic zero, since the coefficient of
$x^4y$ in $(x+y+z+w)^5$ is $5$.  In the domain of the degree-$10$ map, the ten
pairwise Koszul syzygies---the
tuples in $S_5^{\,5}$ with $F_j$ in position $i$ and $-F_i$ in position
$j$---are therefore linearly independent: a linear relation among them
would restrict, on each summand, to a linear relation among the five forms.
Thus the rank is at most $280-10=270$, and the modular computation
supplies the matching lower bound.  Injectivity in degree $9$ propagates
downward by the syzygy-lifting argument in Lemma~\ref{lem:endpoint}, while
surjectivity in degree $11$ propagates upward by ideality.  Together with the
exact degree-$10$ rank and~\eqref{eq:hf-rank}, this gives the Hilbert function
\[
(1,4,10,20,35,51,64,70,65,45,16,0,\ldots),
\]
which is the positive truncation of $(1-t^5)^5/(1-t)^4$.
\end{proof}

\begin{proof}[Proof of Theorem~\ref{thm:main} for $d=5$]
For $r\leq4$, a general tuple is a regular sequence, and
Proposition~\ref{prop:quintic-r5} handles $r=5$.  For $6\leq r\leq20$,
combine Corollary~\ref{cor:quintic-charzero}, the first four rows of
Table~\ref{tab:quintic-blocks}, and Lemma~\ref{lem:endpoint}.

For $21\leq r\leq55$, the first twenty-one forms are linearly independent
in the $56$-dimensional space $S_5$ and their degree-$6$ multiplication map
is surjective.  Extend them to a basis $g_1,\ldots,g_{56}$ of $S_5$.
Every prefix through $g_{55}$ is injective in degree $5$, while surjectivity
in degree $6$ persists after adding generators.  Lemma~\ref{lem:endpoint}
applies with $q=5$.  At $r=56$ the forms span $S_5$, and for $r>56$ the
conclusion persists after adding forms.

For $r\leq5$, the classical results above already supply a nonempty open
subset.  For each fixed $r\geq6$, only finitely many maximal-minor conditions
were used.  The explicit specialization makes their intersection nonempty, so
Lemma~\ref{lem:open} gives the required nonempty open subset $U_{5,r}$.
\end{proof}

\section{The equal-septic slice}\label{sec:septics}

The septic argument uses the same endpoint mechanism but a longer nested
family.  The public certificate
\texttt{froberg\_septics\_certificate.json} contains 120 integral forms of
degree $7$, encoded by their exponent supports, and the maximal-minor data
for all fifteen endpoints below.  Monomial bases are ordered as in
Section~\ref{sec:quintics}.

\begin{proposition}[Septic rank certificate]\label{prop:septic-certificate}
For one common ordered family $G_1,\ldots,G_{120}$ in
$\mathbb Z[x_1,x_2,x_3,x_4]_7$, the fifteen multiplication matrices in
Table~\ref{tab:septic-ranks} have maximal rank over $\FF_2$.  For every
matrix, the certificate records a nonsingular maximal square minor.
\end{proposition}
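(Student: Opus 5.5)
The statement is a finite computational assertion, so the plan is to certify it by exact finite-field linear algebra, in the same way as Proposition~\ref{prop:quintic-certificate}.

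\textbf{Step 1: endpoint blocks.} I would first expand $(1-t^7)^r/(1-t)^4$ for $6\le r\le 120$ and record, for each $r$, the last positive degree $q$. Grouping generator counts with constant $q$ gives the septic analogue of Table~\ref{tab:quintic-blocks}. Each block contributes one injective endpoint $(b,q)$ at its top and one surjective endpoint $(a,q+1)$ at its bottom; these are the fifteen matrices of Table~\ref{tab:septic-ranks}. Since $N_j-rN_{j-7}$ controls degrees $j<14$, I would also check that $q<2d=14$ on every injective range. A surjective check in degree $14$ must have a nonpositive coefficient even after the Koszul term $\binom r2$ is added.

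\textbf{Step 2: build the matrices.} The forms $G_1,\dots,G_{120}$ are decoded from the exponent supports in \texttt{froberg\_septics\_certificate.json}; all coefficients are $1$, or as recorded. For each endpoint $(r,j)$ I would form the $N_j\times rN_{j-7}$ matrix of $\mu^{(7)}_{r,j}$. Monomials are ordered by descending lex, and columns are ordered by form index and then by multiplier, exactly as in Section~\ref{sec:quintics}. Entries are reduced modulo $2$.

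\textbf{Step 3: certify each rank.} Gaussian elimination over $\FF_2$, using bitset rows, either records pivot row and column sets of size $\min(N_j,rN_{j-7})$ or fails. For each recorded pivot set I would then verify the certificate independently. That means extracting the square submatrix and confirming it is invertible over $\FF_2$, for instance by computing its determinant mod $2$ or by re-eliminating. This confirms that the stored minor really is nonsingular. The nesting is what makes one family suffice: every endpoint uses a prefix $G_1,\dots,G_r$, so a single certificate file serves all fifteen checks.

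\textbf{Main obstacle.} The mathematics is trivial; the difficulty is the search, namely exhibiting a sparse nested family for which every one of the fifteen matrices is of maximal rank modulo $2$. Characteristic $2$ is especially dangerous here. Sparse $0/1$ forms with symmetric supports tend to create accidental even-multiplicity cancellations, and in degree $14$ the $\binom r2$ Koszul syzygies force a kernel, so those endpoints must be genuinely surjective rather than square. I would first try greedy construction. Random supports of about three monomials are added one at a time, and a candidate is kept only if all endpoints whose prefix has now been completed still reach maximal rank over $\FF_2$. A failed candidate is resampled. If the tight square endpoints resist, such as $r$ with $rN_{j-7}=N_j$, I would allow denser supports for the corresponding generators. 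The largest matrices, in degrees near $13$ and $14$ with up to $680\times 560$ entries, remain cheap over $\FF_2$, so the whole certificate stays easily reproducible.
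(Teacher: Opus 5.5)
Your proposal is correct and is essentially the paper's proof. The verifier rebuilds each prefix matrix from the 120 recorded forms, computes its rank over $\FF_2$, extracts and re-checks the recorded maximal square minor, and recomputes the boundary-coefficient table, which is exactly your Steps 1--3; your greedy search only describes how such a certificate might be found and is not needed for the proof, and one small slip is that the largest matrix is the $(6,14)$ endpoint of shape $680\times 720$, not $680\times 560$.
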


\begin{table}[ht]
\centering
\caption{Exact endpoint ranks for equal septics.}\label{tab:septic-ranks}
\small
\begin{tabular}{c@{\quad}c@{\quad}c@{\quad}c@{\qquad}l}
\toprule
$r$ & $j$ & matrix shape & $\rank_{\FF_2}$ & role\\
\midrule
  6 & 13 & $560\times504$ & 504 & injective\\
  6 & 14 & $680\times720$ & 680 & surjective\\
  8 & 12 & $455\times448$ & 448 & injective\\
  7 & 13 & $560\times588$ & 560 & surjective\\
 10 & 11 & $364\times350$ & 350 & injective\\
  9 & 12 & $455\times504$ & 455 & surjective\\
 14 & 10 & $286\times280$ & 280 & injective\\
 11 & 11 & $364\times385$ & 364 & surjective\\
 21 &  9 & $220\times210$ & 210 & injective\\
 15 & 10 & $286\times300$ & 286 & surjective\\
 41 &  8 & $165\times164$ & 164 & injective\\
 22 &  9 & $220\times220$ & 220 & surjective\\
119 &  7 & $120\times119$ & 119 & injective\\
 42 &  8 & $165\times168$ & 165 & surjective\\
120 &  7 & $120\times120$ & 120 & surjective\\
\bottomrule
\end{tabular}
\end{table}

\begin{proof}
The source file \texttt{froberg\_septics\_verifier.py} independently
enumerates monomials, reconstructs each matrix from the 120 recorded forms,
computes its rank over $\FF_2$, extracts the selected square submatrix, and
verifies that the submatrix has full rank.  It also
recomputes the boundary-coefficient table used below.  Thus every selected
minor has determinant $1$ modulo $2$ and hence an odd, nonzero integer
determinant.
\end{proof}

Direct integer expansion of $(1-t^7)^r/(1-t)^4$ gives
Table~\ref{tab:septic-blocks}.  In every finite block the last positive
degree is less than $14=2d$, so no two-generator Koszul term appears in
\eqref{eq:low-coeff}.  For $r=6$ the surjective endpoint lies at degree
$14=2d$; the untruncated coefficient there is negative, and surjectivity
gives vanishing.

\begin{table}[ht]
\centering
\caption{Endpoint blocks for equal septics.}\label{tab:septic-blocks}
\small
\begin{tabular}{c@{\quad}c@{\quad}c@{\quad}c}
\toprule
$r$-range & last positive $q$ & injective endpoint & surjective endpoint\\
\midrule
$6$          & 13 & $(6,13)$   & $(6,14)$\\
$7$--$8$     & 12 & $(8,12)$   & $(7,13)$\\
$9$--$10$    & 11 & $(10,11)$  & $(9,12)$\\
$11$--$14$   & 10 & $(14,10)$  & $(11,11)$\\
$15$--$21$   &  9 & $(21,9)$   & $(15,10)$\\
$22$--$41$   &  8 & $(41,8)$   & $(22,9)$\\
$42$--$119$  &  7 & $(119,7)$  & $(42,8)$\\
$r\geq120$   &  6 & ---        & $(120,7)$\\
\bottomrule
\end{tabular}
\end{table}

\begin{proof}[Proof of Theorem~\ref{thm:main} for $d=7$]
For $r\leq4$, a general tuple is a regular sequence.  The case $r=5=n+1$
is the classical strong Lefschetz case~\cite{Stanley1980,Watanabe1987}.

For each finite block $[a,b]$ in Table~\ref{tab:septic-blocks},
Proposition~\ref{prop:septic-certificate} makes
$\mu^{(7)}_{b,q}$ injective and $\mu^{(7)}_{a,q+1}$ surjective over every
characteristic-zero field.  Lemma~\ref{lem:endpoint} gives injectivity
through degree $q$ and surjectivity from degree $q+1$ for every prefix
length $a\leq r\leq b$.  Equation~\eqref{eq:hf-rank}, together with
\eqref{eq:low-coeff} in degrees $d\leq j\leq q$ and vanishing from degree
$q+1$, then gives the positive truncation.

At $r=120$, the last row of Table~\ref{tab:septic-ranks} says that the forms
span the $120$-dimensional space $S_7$.  The quotient therefore vanishes
from degree $7$ onward, and adding further generators preserves this fact.

For $r\leq5$, the classical results above already supply a nonempty open
subset.  For each fixed $r\geq6$, the conditions used are the nonvanishing of
finitely many minors.  The appropriate prefix of the common integral family is
a point at which all of them are nonzero.  Lemma~\ref{lem:open} therefore
gives the nonempty Zariski-open set $U_{7,r}$.
\end{proof}

\begin{remark}
The septic certificate proves a second complete fixed-degree slice; it does
not establish a pattern for all odd degrees.  No conclusion is drawn for
degrees other than $5$ and $7$, for mixed degrees, or for positive
characteristic.
\end{remark}

\section{Reproducibility and characteristic scope}\label{sec:reproducibility}

The two verification programs use only the Python~3 standard library.
After placing the three ancillary files in one working directory, running
\begin{center}
\texttt{python3 froberg\_quintics\_verifier.py}
\end{center}
and
\begin{center}
\texttt{python3 froberg\_septics\_verifier.py
--certificate froberg\_septics\_certificate.json}
\end{center}
reconstructs the endpoint matrices and terminates with \texttt{PASS}.
The quintic program constructs its sparse family internally.  The septic
certificate is kept separate because it records 120 forms and the selected
minor indices for fifteen matrices.

The arguments claim only characteristic zero.  A successful computation
modulo $2$ is used to show that a specified integer determinant is nonzero;
it is not a claim that the desired generic Hilbert series holds in
characteristic $2$.  Some quintic endpoint ranks of the chosen witness drop
in characteristic $3$, and the $(12,7)$ quintic endpoint also drops in
characteristic $5$.  Such a drop shows only that the same integral witness
does not certify that characteristic.

\section*{Acknowledgements}

We acknowledge Gewu Intelligence Lab for providing the
collaborative research environment in which this project was developed.
We thank Ralf Fröberg for helpful correspondence, for drawing our attention
to Stanley's paper in connection with Proposition~3.3, and for recommending
his 2025 paper on ideals of generic forms.
We thank Samuel Lundqvist for helpful comments on the scope and context of
the results and for suggesting a direction for further investigation.

\section*{Data and code availability}

The ancillary-file bundle contains the two standard-library verification
programs and the exact septic certificate.  Together they reconstruct every
finite rank calculation used in the proofs.

\section*{Disclosure of automated assistance}

The AI-assisted workflow used OpenAI GPT-5.6 Sol, Anthropic Claude Fable 5,
and Grok 4.6 for the formulation of mathematical ideas, generation of
conjectures and proof strategies, derivation and checking of intermediate
steps, construction of examples and exact certificates, comparison of
literature and candidate proof approaches, organization of arguments, LaTeX
drafting, and revision of the final text.  The workflow decomposed the problem
into smaller subproblems and used repeated self-critique and alternative
derivations to test the quantifiers, the characteristic-zero scope, and the
endpoint-rank reductions.

\end{document}